\newtheorem{theorem}{Theorem}[section]
\newtheorem{definition}{Definition}[section]
\newtheorem{lemma}{Lemma}[section]
\newtheorem{proposition}{Proposition}[section]
\newtheorem{corollary}{Corollary}[section]
\theoremstyle{remark}
\newtheorem*{remark}{Remark}
\def\H{{\bf H}}
\def\R{{\bf R}}
\def\D{{\bf D}}
\begin{document}

\author{Dragomir \v Sari\' c}

\address{Department of Mathematics, Queens College of CUNY,
65-30 Kissena Blvd., Flushing, NY 11367}
\email{Dragomir.Saric@qc.cuny.edu}

\title{Some remarks on bounded earthquakes}

\subjclass{30F60}

\keywords{}
\date{\today}

\maketitle

\begin{abstract}
We first show that an earthquake of a geometrically infinite
hyperbolic surface induces an asymptotically conformal change in the
hyperbolic metric if and only if the measured lamination associated
with the earthquake is asymptotically trivial on the surface. Then
we show that the contraction along earthquake paths is continuous in
the Teichm\"uller space of any hyperbolic surface. Finally, we show
that if a measured lamination vanishes while approaching infinity at
the rate higher than the distance to the boundary then it must be
trivial.
\end{abstract}

\section{Introduction}

An earthquake $E$ of a hyperbolic surface $M$ is obtained by
partitioning $M$ into the strata and by moving one stratum at a
time. The support of an earthquake $E$ is a geodesic lamination
$\lambda$ on $M$. Each stratum of $E$ is either a geodesic from
$\lambda$ or a component of the complement of $\lambda$ in $M$. The
earthquake $E$ does not change the shape of each stratum (i.e. $E$
is a hyperbolic isometry on each stratum) and it moves each stratum
to the left relative to any other stratum. For all the details see
Thurston \cite{Thu} or Section 2.

The relative movement to the left between any two strata of $E$ is
best described by a transverse measure to $\lambda$ which is
invariant under homotopies relative $\lambda$. A geodesic lamination
together with a transverse measure is called a measured lamination.
An earthquake $E$ is uniquely determined (up to post-composition
with a hyperbolic isometry) by its measured lamination. Any
earthquake $E$ of $M$ onto another hyperbolic surface $M_1$ induces
a homeomorphism between the boundaries of their corresponding
universal coverings (we take the upper half-plane $\H$ as universal
coverings), i.e. $E$ induces a homeomorphism of the extended real
line $\hat{\R}$ which conjugates the uniformizing Fuchsian group of
$M$ onto the uniformizing Fuchsian group of $M_1$. Thurston
\cite{Thu} showed that any homeomorphism of $\hat{\R}$ is induced by
an earthquake $E$ of the upper half-plane $\H$, and if a
homeomorphism conjugates a Fuchsian group $G$ onto another Fuchsian
group then $E$ is an earthquake of $\H /G$.

An earthquake $E$ of the upper half-plane $\H$ induces a
quasisymmetric homeomorphism of $\hat{\R}$ if and only if the
transverse measure of $E$ is uniformly bounded on all hyperbolic
arcs of length $1$ (see \cite{Sa1}, \cite{GHL}, \cite{Hu},
\cite{EMM}, \cite{Sa2} and \cite{Thu}, or Section 2). Moreover, an
earthquake $E$ of $\H$ induces a symmetric homeomorphism of
$\hat{\R}$ if and only if the transverse measure of $E$ tends to
zero uniformly on hyperbolic arcs of length $1$ which converge to
the boundary $\hat{\R}$ of $\H$ (see \cite{GHL}, \cite{Hu} and
\cite{Sa2}, or Section 2).

A symmetric homeomorphism of $\hat{\R}$ extends to an asymptotically
conformal quasiconformal map of $\H$ and an asymptotically conformal
quasiconformal map of $\H$ extends to a symmetric homeomorphism of
$\hat{\R}$ (for details of the definitions see Section 2). A proper
generalization of the space of all symmetric homeomorphisms of
$\hat{\R}$ for geometrically infinite hyperbolic surface $M$ is the
space of all asymptotically conformal quasiconformal maps from $M$
onto variable hyperbolic surfaces $M_1$ up to bounded homotopy (see
\cite{GS}, \cite{EGL}). A lift to $\H$ of an asymptotically
conformal quasiconformal map of $M$ does not induce a symmetric
homeomorphism of $\hat{\R}$ (unless $M=\H $). Our first result is a
characterization of earthquakes of $M$ whose lifts to $\H$ agree on
the boundary $\hat{\R}$ with the continuous extensions of lifts of
asymptotically conformal quasiconformal maps of $M$ (see Theorem
3.1).

Let $M$ be a geometrically infinite hyperbolic surface and let $E$
be an earthquake of $M$. Let $\tilde{E}:\H\to \H$ be a lift of $E$
to the universal covering and let $\tilde{E}|_{\hat{\R}}$ be its
continuous extension to the boundary $\hat{\R}$. Let $f:M\to M_1$ be
a quasiconformal map. Denote by $\tilde{f}:\H\to\H$ a lift of $f$ to
the universal covering $\H$ and by $\tilde{f}|_{\hat{\R}}$ the
continuous extension to $\hat{\R}$. The measured lamination
associated with $E$ is said to be {\it asymptotically trivial} on
$M$ if the supremum of the measures of hyperbolic arcs of length $1$
outside a compact subset $K$ of $M$ is approaching zero as $K$
exhausts $M$.

\vskip .2 cm

\noindent {\bf Theorem 1.} {\it An earthquake $E$ of a geometrically
infinite hyperbolic surface $M$ induces a homeomorphism
$\tilde{E}|_{\hat{\R}}$ of $\hat{\R}$ which agrees with an induced
homeomorphism $\tilde{f}|_{\hat{\R}}$ of an asymptotically conformal
quasiconformal map $f:M\to M_1$ if and only if the transverse
measure of $E$ is asymptotically trivial on $M$.}

\vskip .2 cm

Let $ML_b(\H )$ denote the space of all bounded measured laminations
on $\H$. Each quasisymmetric homeomorphism $h:\hat{\R}\to \hat{\R}$
is obtained by continuously extending to $\hat{\R}$ an earthquake
$E$ of $\H$ with associated bounded measured lamination. This gives
an {\it earthquake measure} map $\mathcal{EM}:T(\H )\to ML_b(\H )$,
where $T(\H )$ is the universal Teichm\"uller space which can be
identified with the space of all quasisymmetric maps of $\hat{\R}$
modulo post-composition with $PSL_2(\R )$. If $\H$ is replaced with
a compact hyperbolic surface $S$, then the weak* topology on $ML(S)$
makes the earthquake measure map $\mathcal{EM}:T(S )\to ML(S )$ a
homeomorphism. The question about a natural topology on $ML_b(\H )$
which makes $\mathcal{EM}:T(\H )\to ML_b(\H )$ into homeomorphism is
still open (by a natural topology on $ML_b(\H )$ we mean a topology
given in terms of $\H$ and measured laminations without reference to
the earthquake measure map). However, we show that the earthquake
measure map is natural for the multiplication of the measured
laminations by a positive parameter (see Theorem 4.1).

\vskip .2 cm

\noindent {\bf Theorem 2.} {\it The scaling measure map
$H:[0,1]\times T(\H )\to T(\H )$ given by
$$
H(t,[h])=[E^{(1-t)\mathcal{EM}([h])}|_{\hat{\R}}]
$$
is continuous.}

\vskip .2 cm

The above theorem is true for Teichm\"uller spaces of arbitrary
hyperbolic surface $M$ as well. A direct corollary to the above
theorem is that the Teichm\"uller space $T(M)$ is contractible (also
the closed subspace of $T(M)$ of classes with asymptotically
conformal representatives is contractible). This was originally
proved by Douady and Earle \cite{DE}. We obtained a new proof of
this fact using earthquakes (see Corollary 4.1).

\vskip .2 cm

Finally, we consider measured laminations whose measures vanish as
they approach boundary. Gardiner, Hu and Lakic \cite{GHL} considered
earthquakes whose measured laminations $\mu$ satisfy $\mu
=o(\delta^{\alpha} )$, for $\alpha >0$. (By the definition, $\mu
=o(\delta^{\alpha} )$ if the supremum of $\mu (I)$ over all
hyperbolic arcs of length $1$ which are on the distance at most
$\delta$ from the boundary is $o(\delta^{\alpha} )$.) We show that
if $\alpha =1$, i.e. $\mu =o(\delta )$, then $\mu$ is a trivial
measured lamination, i.e. $\mu =0$ (see Proposition 5.1).

\section{Preliminaries: Bounded Earthquakes}

Let $\H$ be the upper half-plane with the metric $\rho
(z)=\frac{|dz|}{y}$. Denote by $\hat{\R}=\R\cup\{\infty\}$ the
boundary at infinity of $\H$. We identify the hyperbolic plane
with the model $(\H ,\rho )$.

Let $M$ be a Riemann surface of infinite geometric type. The
universal covering of $M$ is the hyperbolic plane $\H$. The
Riemann surface $M$ supports a unique hyperbolic metric in its
conformal class which is the projection of $\rho$ under the
universal covering by $\H$. Let $G$ be a subgroup of $PSL_2(\R )$
such that $M$ is conformal and isometric to $\H /G$. From now on,
we fix one such $G$ and the identification $M=\H /G$.

\begin{definition}
A geodesic lamination $\lambda$ on $\H$ is a closed subset $|\lambda
|$ of $\H$ together with an assignment of a foliation of $|\lambda
|$ by geodesics of $\H$. A geodesic lamination $\lambda_G$ on $M=\H
/G$ is a closed subset $|\lambda_G|$ of $\H$ which is foliated by
geodesics of $\H$ such that the subset $|\lambda_G|$ and the
foliation of $|\lambda_G|$ are invariant under $G$. A stratum of
$\lambda$ is either a geodesic of the foliation of $|\lambda |$ or a
connected component of $\H\setminus |\lambda |$.
\end{definition}

\begin{remark}
A component of $\H\setminus |\lambda |$ is an open hyperbolic
polygon whose boundary in $\H$ consists of geodesics of the
foliation. The polygon is possibly infinite-sided and it can contain
open intervals of $\hat{\R}$ on its boundary at infinity.
\end{remark}

\begin{definition}
Let $\lambda$ be a geodesic lamination on $\H$. A measured
lamination $\mu$ on $\H$ with support $\lambda$ is an assignment of
a positive, Radon measure to each closed, finite hyperbolic arc $I$
in $\H$ whose support is $I\cap |\lambda |$ and which is invariant
under homotopies which preserve the foliation of $|\lambda |$. (We
do not require that the homotopies preserve the endpoints of $I$.) A
measured lamination on $\H /G$ is an assignment of a measured
lamination $\mu_G$ on $\H$ such that the support $\lambda_G$ is a
measured lamination on $\H /G$ (i.e. $\lambda_G$ is invariant under
$G$) and that the measure $\mu_G$ is invariant under the action of
$G$.
\end{definition}

We define an earthquake of the hyperbolic plane $\H$ following Thurston \cite{Thu}.

\begin{definition}
An earthquake $E$ of $\H$ whose support is a geodesic lamination
$\lambda$ is a bijective map $E:\H\to\H$ such that the restriction
of $E$ to any stratum of $\lambda$ is in $PSL_2(\R )$ and that $E$
moves strata of $\lambda$ to the left relatively to each other.
Namely, for any stratum $g$ of $\lambda$ we have that $E|_g\in
PSL_2(\R )$, and for any two strata $g_1$, $g_2$ of $\lambda$ we
have that $E|_{g_2}\circ (E|_{g_1})^{-1}$ is a hyperbolic isometry
whose axis is weakly separating the two strata $g_1$ and $g_2$, and
which translates $g_2$ to the left as seen from $g_1$. An earthquake
$E$ of $\H$ is said to be an earthquake of $\H /G$ if the support
$\lambda$ is invariant under $G$, and for every $\gamma\in G$ there
exists $\gamma_E\in PSL_2(\R )$ such that $E\circ\gamma
=\gamma_E\circ E$.
\end{definition}

\vskip .3 cm

An earthquake $E$ of $\H$ naturally induces a measured lamination
$\mu$ with support $\lambda$ as follows. The measure $\mu$ on each
closed, hyperbolic arc $I$ is approximated by the sum of the
translation lengths between the comparison isometries of $n$
consecutive strata of the support $\lambda$ of $E$ which intersect
$I$ such that the distance between any two consecutive strata is
small for $n$ large (see \cite{Thu}, \cite{GHL}). The measure $\mu
(I)$ is obtained by taking the limit of the sum of translation
lengths as $n\to\infty$. It is clear that $\mu$ is homotopy
invariant. An earthquake $E$ is uniquely determined by its measured
lamination up to post-composition with $PSL_2(\R )$ \cite{Thu}. A
measured lamination corresponding to an earthquake is called an {\it
earthquake measure}. Moreover, $E$ is an earthquake on $\H /G$ if
and only if the earthquake measure of $E$ is a measured lamination
on $\H /G$.

\vskip .3 cm

An earthquake of $\H$ is not necessarily a homeomorphism of $\H$. A
discontinuity of an earthquake appears at the support geodesics of
the earthquake measure $\mu$ which are atoms of $\mu$. However, an
earthquake always extends by continuity to a homeomorphism of the
boundary $\hat{\R}$ of $\H$. Thurston \cite{Thu} proved that any
homeomorphism of $\hat{\R}$ can be obtained as the continuous
extension of an  earthquake of $\H$ to the boundary $\hat{\R}$. In
addition, any homeomorphic deformation of the hyperbolic structure
on $M=\H /G$ is obtained by an earthquake of $M$ \cite{Thu}.

\begin{definition}
A measured lamination $\mu$ with the support $\lambda$ is said to be bounded if
$$
\|\mu\| =\sup_I\mu (I)<\infty ,
$$
where the supremum is over all length $1$ closed hyperbolic arcs $I$ in $\H$.
\end{definition}

An earthquake $E$ of $\H$ extends by continuity to a quasisymmetric map of $\hat{\R}$ if and only if
the earthquake measure $\mu$ of $E$ is bounded (see \cite{Sa1}, \cite{GHL}, \cite{Thu}, \cite{EMM}, \cite{Sa2}).

\begin{definition}
A measured lamination $\mu$ of $\H$ is said to be asymptotically trivial if for every $\epsilon >0$ there
exists a compact subset $K\subset\H$ such that
$$
\sup_{I\cap K=\emptyset}\mu (I)<\epsilon ,
$$
where the supremum is over all length $1$ hyperbolic arcs which do not intersect $K$.
\end{definition}

A quasisymmetric homeomorphism of $\hat{\R}$ is said to be symmetric
if it distorts symmetric triples into almost symmetric triples on
small scales \cite{GS}. Equivalently, a quasisymmetric homeomorphism
$h:\hat{\R}\to\hat{\R}$ is symmetric if there exists a
quasiconformal extension $f:\H\to\H$ of $h$ such that for every
$\epsilon >0$ there exists a compact set $K\subset\H$ with $\|Belt
(f)|_{\H\setminus K}\|_{\infty}<\epsilon$, where $Belt
(f)=\bar{\partial}f/\partial f$ is the Beltrami coefficient of $f$.

It was proved by Gardiner, Hu and Lakic \cite{GHL} that an
earthquake $E$ extends to a symmetric homeomorphism of $\hat{\R}$
if and only if the earthquake measure $\mu$ of $E$ is
asymptotically trivial (see also \cite{Hu} and \cite{Sa2} for
different proofs).

\section{Asymptotically conformal structures on infinite surfaces via earthquakes}

Let $M=\H /G$ be a geometrically infinite hyperbolic surface. A quasiconformal map $F:M\to M_1$ is
said to be {\it asymptotically conformal} on $\H /G$ if for any $\epsilon >0$ there exists a compact set
$K\subset M$ such that $\| Belt(F) |_{M\setminus K}\|_{\infty}<\epsilon$, where
$Belt(F) =\bar{\partial}F/\partial F$ is the Beltrami coefficient of $F$.

\begin{definition}
A measured lamination $\mu_G$ on $\H /G$ is said to be {\it asymptotically trivial} on $\H /G$ if
for any $\epsilon >0$ there exists a compact subset $K$ of $\H /G$ such that
$\sup_{I\cap\tilde{K}=\emptyset}\mu (I)<\epsilon$, where the supremum is over all closed hyperbolic arcs
$I$ of length $1$ which do not intersect the lift $\tilde{K}$ of $K$ to $\H$.
\end{definition}

\begin{theorem}
Let $F:M\to M_1$ be a quasiconformal map. Fix a lift $f:\H\to\H$
of $F$ which conjugates $G$ onto $G_1$ and denote by
$h:\hat{\R}\to\hat{\R}$ the continuous extension of $f$ to
$\hat{\R}$. Let $E:\H\to\H$ be the earthquake on $\H /G$ whose
continuous extension to $\hat{\R}$ equals $h$. Then the earthquake
measure $\mu_h$ of $E$ is asymptotically trivial on $\H /G$ if and
only if $F$ is homotopic to an asymptotically conformal map on $\H
/G$ through a bounded homotopy.
\end{theorem}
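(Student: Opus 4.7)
The plan is to combine the local comparison between earthquake measures and Beltrami coefficients of quasiconformal extensions with the standard bounded-homotopy criterion: two quasiconformal maps $M\to M_1$ are homotopic through a bounded homotopy if and only if their lifts agree on $\hat{\R}$.

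For the forward direction, assume $\mu_h$ is asymptotically trivial on $M$. I would replace $E$ by a $G$-equivariant quasiconformal map $\bar f:\H\to\H$ with $\bar f|_{\hat{\R}}=h$ satisfying a pointwise estimate of the form
\[
|\mathrm{Belt}(\bar f)(z)|\le \Phi\bigl(\sup\{\mu_h(I):z\in I,\ |I|=1\}\bigr),
\]
where $\Phi(t)\to 0$ as $t\to 0$. Such a quasiconformal representative of an earthquake, together with the local comparison $\Phi$, can be assembled from Thurston's construction and the bounded-earthquake estimates in \cite{Sa1, Sa2, GHL}; $G$-equivariance follows from the canonical nature of the assembly and from the $G$-invariance of $\mu_h$. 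The asymptotic triviality of $\mu_h$ on $M$ then forces $\mathrm{Belt}(\bar f)$ to be uniformly small outside the lift of a compact $K\subset M$, so $\bar f$ descends to an asymptotically conformal map $\bar F:M\to M_1$. Since $\bar f|_{\hat{\R}}=f|_{\hat{\R}}=h$, the bounded-homotopy criterion places $\bar F$ in the bounded-homotopy class of $F$.

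For the backward direction, let $F'$ be an asymptotically conformal map homotopic to $F$ through a bounded homotopy, with lift $f'$. Bounded homotopy forces $f'|_{\hat{\R}}=h$. Given $\epsilon>0$, pick a compact $K\subset M$ with $\|\mathrm{Belt}(F')|_{M\setminus K}\|_\infty$ as small as desired. The essential step is the converse local estimate
\[
\mu_h(I)\le \Psi\bigl(\|\mathrm{Belt}(f')|_{D_R(I)}\|_\infty\bigr)
\]
for unit arcs $I\subset\H$, where $D_R(I)$ is a fixed hyperbolic neighborhood of $I$ and $\Psi(t)\to 0$ as $t\to 0$. I would prove this via normalization and compactness: normalize by $PSL_2(\R)$ so that $I$ becomes a fixed arc $I_0$; along any hypothetical sequence with vanishing Beltrami on $D_R(I_0)$ but $\mu_{h_n}(I_0)$ bounded away from zero, the $f'_n$ subconverge on $D_R(I_0)$ to a M\"obius transformation, and Thurston's extraction of the earthquake measure from the quasisymmetric distortion of the boundary homeomorphism on intervals associated to $I_0$ forces $\mu_{h_n}(I_0)\to 0$, a contradiction. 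Applying this estimate to arcs outside a suitable enlargement of $\tilde K$ yields asymptotic triviality of $\mu_h$ on $M$.

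The main obstacle is the converse local estimate $\Psi$: it is essentially a local, quantitative version of the Gardiner--Hu--Lakic theorem, and the delicate part is to localize their boundary-behavior argument to the scale of a single unit arc together with a bounded-size surrounding disk rather than to the global behavior of $h$ near all of $\hat{\R}$. By contrast, the forward direction's construction is a comparatively routine local assembly of quasiconformal interpolations of the stratified isometry $E$.
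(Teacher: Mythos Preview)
Your forward direction (asymptotic triviality of $\mu_h$ implies an asymptotically conformal representative) is sound: pointwise estimates of the type $|\mathrm{Belt}(\bar f)(z)|\le\Phi(\sup\{\mu_h(I):z\in I,\ |I|=1\})$ do follow from the earthquake-to-quasiconformal constructions in \cite{GHL,Sa1,Sa2}, and those constructions are canonical enough to be $G$-equivariant. The paper argues this direction differently, by contradiction through the Douady--Earle extension, but your route is legitimate.

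The backward direction has a real gap. The estimate $\mu_h(I)\le\Psi(\|\mathrm{Belt}(f')|_{D_R(I)}\|_\infty)$ with a \emph{fixed} radius $R$ and an \emph{arbitrary} quasiconformal extension $f'$ of $h$ cannot be established by your compactness argument, and is in fact not to be expected. The quantity $\mu_h(I)$ is a functional of the boundary homeomorphism $h=f'|_{\hat\R}$, and $\hat\R$ is disjoint from every $D_R(I)$; knowing that the normalized $f'_n$ subconverge to a conformal map on $D_R(I_0)$ gives no control over the limiting behaviour of $h_n$ on $\hat\R$, hence none over $\mu_{h_n}(I_0)$. The sentence ``Thurston's extraction \ldots\ forces $\mu_{h_n}(I_0)\to 0$'' is precisely the missing step, not a consequence of what precedes it.

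The paper avoids this by fixing a \emph{specific} extension, the Douady--Earle extension $DE(h)$, and using two of its properties. First, by \cite{EMS}, if $F$ is boundedly homotopic to an asymptotically conformal map then $DE(h)$ itself is asymptotically conformal on $\H/G$. Second, conformal naturality gives $\mathrm{Belt}(DE(\delta_n\circ h\circ\gamma_n^{-1}))(i)=\mathrm{Belt}(DE(h))(\gamma_n^{-1}(i))$, so the normalized Beltrami at $i$ is literally the original Beltrami at a point escaping every compact of $M$ and hence tends to $0$. On the lamination side one passes to a weak$^*$ limit $\mu_*$ of the push-forwards $(\gamma_n)_*\mu_h$ and uses that suitably normalized $E^{\mu_n}|_{\hat\R}$ converge to $E^{\mu_*}|_{\hat\R}$ \cite{Sa2}; the Douady--Earle side forces this limit to be M\"obius, so $\mu_*=0$, contradicting $\mu_h(I_n)\ge\epsilon_0$. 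The essential point is that conformal naturality converts the \emph{global} vanishing of $\mathrm{Belt}(DE(h))$ off compacta of $M$ into vanishing at a single normalized point, which is exactly what your local $\Psi$-estimate tries to extract from an arbitrary $f'$ but cannot.
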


\begin{proof}
Let $F:M\to M_1$ be homotopic to an asymptotically conformal map
through a bounded homotopy. Let $DE(h)$ be the Douady-Earle
extension of the quasisymmetric map $h:\hat{\R}\to\hat{\R}$ to $\H$
(see \cite{DE}). Then $DE(h)$ is necessarily asymptotically
conformal on $\H /G$ (see \cite{EMS}), namely for every $\epsilon
>0$ there exists a compact set $K\subset \H /G$ such that $\| Belt
(DE(h))|_{\H\setminus\tilde{K}}\|_{\infty}<\epsilon$, where
$\tilde{K}$ is the lift of $K$ to $\H$. Assume on the contrary that
the earthquake measure $\mu_h$ of $E$ is not asymptotically trivial
on $M=\H /G$. Thus, there exist $\epsilon_0>0$ and a sequence $I_n$
of length $1$ closed hyperbolic arcs in $\H$ leaving the preimage in
$\H$ of any compact subset of $M$ such that
$\mu_h(I_n)\geq\epsilon_0$. Let $\gamma_n\in PSL_2(\R )$ be such
that $\gamma_n(I_n)\ni i$. Let $\mu_n=(\gamma_n)^{*}(\mu_h)$ be the
push-forward of $\mu_h$ by $\gamma_n$. Note that the support of
$\mu_n$ is $\gamma_n(\lambda_h)$, where $\lambda_h$ is the support
of $\mu_h$. Then $\mu_n$ is the earthquake measure of an earthquake
$E^{\mu_n}$ which agrees with $h\circ \gamma_n^{-1}$ on $\hat{\R}$
up to post-composition with an element of $PSL_2(\R )$. Since
$\|\mu_n\| =\|\mu_h\|$ for each $n$, it follows that there is a
subsequence $\mu_{n_k}$ of the sequence $\mu_n$ which converges in
the weak* sense to a measured lamination $\mu_{*}$ with $\|\mu_{*}\|
\leq \|\mu_h\|$. Without loss of generality, we assume that the
whole sequence $\mu_n$ converges. This implies that a properly
normalized sequence of earthquakes $E^{\mu_{n}}|_{\hat{\R}}$
converge weakly to $E^{\mu_{*}}|_{\hat{\R}}$, where $E^{\mu_{n}}$
and $E^{\mu_{*}}$ are earthquakes with earthquake measures $\mu_{n}$
and $\mu_{*}$ respectively (see \cite[Proposition 3.3]{Sa2}). Let
$\delta_{n}\in PSL_2(\R )$ be such that $\delta_{n\frac{}{}}\circ
h\circ\gamma_n^{-1}|_{\hat{\R}}=E^{\mu_n}|_{\hat{\R}}$ for all $n$.
Since $I_n$ leaves the preimage of every compact subset of $\H /G$
and by the conformal naturality of the Douady-Earle extension, it
follows that the Beltrami coefficient $Belt(DE(\delta_n\circ
h\circ\gamma_n^{-1}))$ converges to zero uniformly on compact
subsets of $\H$. In addition, since $\delta_n\circ
h\circ\gamma_n^{-1}$ weakly converges, it follows that
$DE(\delta_n\circ h\circ\gamma_n^{-1})$ converges to a M\"obius map
by the properties of the Douady-Earle extension (see \cite{DE} and
\cite{EMS}). On the other hand, $\delta_n\circ
h\circ\gamma_n^{-1}=E^{\mu_n}|_{\hat{\R}}$ weakly converges to
$E^{\mu_{*}}|_{\hat{\R}}$ with $\mu_{*}$ non-trivial. This implies
that $DE(E^{\mu_{*}}|_{\hat{\R}})$ is not M\"obius which is a
contradiction.

\vskip .3 cm

Let $\mu_h$ be an asymptotically trivial earthquake measure on $\H
/G$ corresponding to $h$. We need to show that $DE(h)$ is
asymptotically conformal. Assume on the contrary that there exists a
sequence $z_n\in\H$ which leaves the preimage in $\H$ of every
compact subset of $\H /G$ such that
$|Belt(DE(h))(z_n)|\geq\epsilon_0$, for some fixed $\epsilon_0>0$.
Let $\gamma_n\in PSL_2(\R )$ be such that $\gamma_n(z_n)=i$. Let
$\mu_n=(\gamma_n)^{*}\mu_h$. Then there exists a subsequence
$\mu_{n_k}$ of $\mu_n$ which converges in the weak* sense to a
measured lamination $\mu_{*}$ with $\|\mu_{*}\|\leq\|\mu_h\|$.
Without loss of generality we assume that the sequence $\mu_n$
converges. By the choice of $\gamma_n$, it follows that $\mu_{*}$ is
the trivial measured lamination, i.e. $\mu_{*}=0$. This implies that
there exists $\delta_n\in PSL_2(\R )$ such that $\delta_n\circ
h\circ\gamma_n^{-1}=E^{\mu_n}|_{\hat{\R}}$ converges weakly to the
identity, where $E^{\mu_n}|_{\hat{\R}}$ is a properly normalized
earthquake \cite{Sa2}. Then $Belt(DE(\delta_n\circ
h\circ\gamma_n^{-1}))$ converges uniformly to $0$ on compact subsets
of $\H$. However, $|Belt(DE(\delta_n\circ
h\circ\gamma_n^{-1}))(i)|=|Belt(DE(h))(z_n)|\geq\epsilon_0>0$ which
is a contradiction.
\end{proof}

\section{The earthquake measure map and the contractibility}

Given a quasisymmetric map $h:\hat{\R}\to\hat{\R}$, there is a
unique earthquake $E_h$ of $\H$ whose continuous extension to
$\hat{\R}$ coincides with $h$. Let $\mu_h$ be the earthquake measure
of $E_h$ ($\mu_h$ is bounded \cite{Sa2}). Thus we obtain a one to
one assignment $[h]=PSL_2(\R )\circ h\mapsto \mu_h$ from the
universal Teichm\"uller space $T(\H )$ onto the space of bounded
measured laminations $ML_b(\H )$ of $\H$. (Note that $[h]$ denotes
the class $PSL_2(\R )\circ h$ of a quasisymmetric map $h$ and that
$T(\H )$ is the space of all such classes of quasisymmetric maps.)
We denote this map by $\mathcal{EM}:T(\H )\to ML_b(\H )$ and call it
the {\it earthquake measure map}. The earthquake measure map
restricts to a map from the Teichm\"uller space of a hyperbolic
Riemann surface $M$ onto the space of bounded measured laminations
on $M$ (see \cite{Thu}).

If $M$ is a compact surface of genus at least two, then the weak*
topology on $ML_b(M)$ makes the earthquake measure map
$\mathcal{EM}:T(M)\to ML_b(M)$ a homeomorphism (see \cite{Ke}). If
$M$ is a geometrically infinite Riemann surface (including the
possibility that $M=\H$), then it does not appear that there is a
natural topology on $ML_b(M)$ which would make $\mathcal{EM}:T(M)\to
ML_b(M)$ a homeomorphism. (One can induce a topology on $ML_b(M)$ by
the pull back of the topology on $T(M)$ but there is no information
about such assigned topology on $ML_b(M)$. A "natural" topology on
$ML_b(M)$ should be given in terms of the measured laminations on
the surface $M$ without the use of the quasisymmetric maps.)
However, we show below that the earthquake measure map
$\mathcal{EM}:T(M)\to ML_b(M)$ is natural for the scaling of the
earthquake measure.

\begin{theorem}
The scaling measure map $H:[0,1]\times T(\H )\to T(\H )$ given by
$$
H(t,[h])=[E^{(1-t)\mathcal{EM}([h])}|_{\hat{\R}}]
$$
is continuous.
\end{theorem}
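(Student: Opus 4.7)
My plan is to combine weak$^*$ compactness of bounded measured laminations with \cite[Proposition 3.3]{Sa2}, which asserts that weak$^*$ convergence of bounded measured laminations implies weak convergence of the associated (properly normalized) boundary earthquakes. Continuity then follows by a standard subsequence argument.

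Fix $(t_0, [h_0]) \in [0,1]\times T(\H)$ and suppose $(t_n, [h_n]) \to (t_0, [h_0])$. Set $\mu_n = \mathcal{EM}([h_n])$ and $\mu_0 = \mathcal{EM}([h_0])$. First I would observe that since $[h_n]\to [h_0]$ in $T(\H)$, the cross-ratio distortions of suitably normalized representatives are uniformly bounded, so by the quasisymmetric--bounded-measure correspondence recalled in Section 2 one has a uniform bound $\|\mu_n\|\leq C$. It suffices to show that every subsequence of $\{H(t_n,[h_n])\}$ admits a further subsequence converging to $H(t_0,[h_0])$. Passing to any such subsequence, weak$^*$ compactness of bounded Radon measures lets me extract a weak$^*$ sublimit $\mu_\ast$ of $\mu_n$ with $\|\mu_\ast\|\leq C$.

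Now \cite[Proposition 3.3]{Sa2} gives weak convergence of the properly normalized boundary earthquakes $E^{\mu_n}|_{\hat{\R}}\to E^{\mu_\ast}|_{\hat{\R}}$. Since these same normalized representatives also realize the convergence $h_n\to h_0$ in $T(\H)$, uniqueness of limits forces $E^{\mu_\ast}|_{\hat{\R}}$ to represent $[h_0]$, and uniqueness of the earthquake measure then gives $\mu_\ast=\mu_0$. The joint continuity of $(t,\mu)\mapsto (1-t)\mu$ in the weak$^*$ topology (a short estimate against compactly supported test functions using the uniform bound on $\|\mu_n\|$) next yields $(1-t_n)\mu_n\to (1-t_0)\mu_0$ weakly$^*$. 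A second application of \cite[Proposition 3.3]{Sa2} then produces weak convergence $E^{(1-t_n)\mu_n}|_{\hat{\R}}\to E^{(1-t_0)\mu_0}|_{\hat{\R}}$, i.e.\ $H(t_n,[h_n])\to H(t_0,[h_0])$ in $T(\H)$.

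The step I expect to require the most care is matching the notions of convergence: the weak convergence of normalized boundary homeomorphisms produced by Proposition 3.3 of \cite{Sa2} must be shown to agree with the topology on $T(\H)$ used in the statement of the theorem. The uniform bound $\|(1-t_n)\mu_n\|\leq C$ gives a uniformly quasisymmetric family whose equicontinuity should upgrade pointwise/weak convergence to the appropriate convergence in $T(\H)$, but making this precise --- together with the bookkeeping of the $PSL_2(\R)$-normalizations needed so that the twice-invoked Proposition 3.3 of \cite{Sa2} actually pins down the correct representatives --- is where the main obstacle lies.
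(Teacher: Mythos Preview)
Your argument has a genuine gap at exactly the point you flag, and the fix you propose does not work. From the weak$^*$ convergence $(1-t_n)\mu_n\to(1-t_0)\mu_0$ and \cite[Proposition~3.3]{Sa2} you obtain only pointwise convergence of the normalized boundary earthquakes. This is strictly weaker than convergence in $T(\H)$, and equicontinuity of a uniformly quasisymmetric family does \emph{not} upgrade it. A clean counterexample: fix a single geodesic $g$ with a Dirac mass of weight $w>0$ and let $\gamma_n\in PSL_2(\R)$ push $g$ to a geodesic shrinking to a boundary point. The boundary earthquakes $E^{(\gamma_n)_*\mu}|_{\hat{\R}}$ converge uniformly to the identity (the family is uniformly quasisymmetric, hence equicontinuous), yet each lies at the same fixed positive Teichm\"uller distance from $[\mathrm{id}]$, since they are all M\"obius conjugates of one another. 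So your ``i.e.'' in the penultimate paragraph is false, and nothing in your outline recovers it.

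What is missing is precisely what the paper supplies: a \emph{uniform} form of weak$^*$ convergence of the earthquake measures. The paper first proves a lemma to the effect that if $[h_n]\to[h]$ in $T(\H)$ then, for every continuous test function supported in a fixed box $Q_*$ of Liouville measure $1$, the integrals of the pushed-forward measures $(\gamma_Q)_*\mu_n$ and $(\gamma_Q)_*\mu$ agree uniformly over \emph{all} boxes $Q$ with $L(Q)=1$ (equivalently, over all M\"obius renormalizations). This uniformity is exactly what survives conjugation by an arbitrary sequence $\gamma_n\in PSL_2(\R)$; ordinary weak$^*$ convergence does not. The paper then argues by contradiction through the Douady--Earle extension: if the Teichm\"uller distance failed to go to zero there would be points $z_n$ where the Beltrami coefficients of $DE(E^{(1-t_n)\mu_n}|_{\hat{\R}})$ and $DE(E^{(1-t)\mu}|_{\hat{\R}})$ differ by a definite amount; sending $z_n$ to $i$ by $\gamma_n$, the uniform-weak$^*$ lemma guarantees the conjugated scaled measures still have the same weak$^*$ limit, so the conjugated boundary maps have the same pointwise limit, and conformal naturality of $DE$ then forces the Beltrami coefficients to agree at $i$ in the limit --- a contradiction. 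Your outline, by contrast, never moves the comparison point and therefore never confronts the possibility that the obstruction to Teichm\"uller convergence escapes to the boundary.
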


\begin{remark}
The map $H:[0,1]\times T(\H )\to T(\H )$ restricts to a continuous
map $H:[0,1]\times T(\H /G)\to T(\H /G)$, for any Fuchsian group
$G$.
\end{remark}

\begin{proof}
Let $[h]\in T(\H )$. We show that $H$ is continuous at $(t,[h])$.
Let $[h_n]\in T(\H )$ be a sequence converging to $[h]$ in the
Teichm\"uller metric of $T(\H )$ and let $t_n\to t$ as
$n\to\infty$. We need to show that $H(t_n,[h_n])\to H(t,[h])$ as
$n\to\infty$.

We give a lemma which is used in the proof. Let
$\mathcal{G}=\hat{\R}\times\hat{\R}\setminus diag$ be the space of
(oriented) geodesics in $\H$. We define an angle metric on
$\hat{\R}$ as follows. The distance between two points
$x,y\in\hat{\R}$ is given by the minimum of the two angles between
hyperbolic rays both starting at $i$ and ending at $x$ and at $y$,
respectively. The angle metric induces a metric on $\mathcal{G}$.
Let $Q_{*}=[a,b]\times [c,d]\subset \mathcal{G}$, for $a,b,c,d$ in
counterclockwise order on $\hat{\R}$, be a fixed "box" of geodesics
such that the Liuoville measure $L(Q_{*})$ of the box $Q_{*}$ is
$1$, namely
$$
L(Q_{*})=\log\frac{(c-a)(d-b)}{(d-a)(c-b)}=1.
$$

\begin{lemma}
Let $[h_n]\to [h]$ in $T(\H )$, $\mu_n=\mathcal{EM}([h_n])$ and
$\mu =\mathcal{EM}([h])$. Then $\mu_n\to \mu$ in the following
sense:

\vskip .1 cm

\noindent Let $Q_{*}=[a,b]\times [c,d]$ be a fixed box with
$L(Q_{*})=1$. If $Q$ is any box with $L(Q)=1$, then there exists a
unique $\gamma_Q\in PSL_2(\R )$ such that $\gamma_Q(Q)=Q_{*}$. For
any continuous function $\varphi :\mathcal{G}\to\R$ whose support
is in the fixed box $Q_{*}$, we have that
$$
\mathcal{S}_{\varphi}(\mu_n,\mu )=\sup_{L(Q)=1}\big{|}
\iint_Q\varphi d(\gamma_Q)^{*}(\mu_n )-\iint_Q\varphi
d(\gamma_Q)^{*}(\mu ) \big{|}\to 0
$$
as $n\to\infty$, where the supremum is over all boxes $Q$ with
$L(Q)=1$.
\end{lemma}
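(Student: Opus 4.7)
The plan is to argue by contradiction using weak-$*$ compactness of bounded Radon measures together with the continuity of the earthquake measure map under suitably normalized weak convergence of boundary maps.

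Suppose the conclusion fails. Then there exist $\epsilon_{0}>0$ and boxes $Q_{n}$ with $L(Q_{n})=1$ such that, writing $\gamma_{n}:=\gamma_{Q_{n}}\in PSL_{2}(\R)$,
$$
\Big|\iint_{Q_{*}}\varphi\, d(\gamma_{n})_{*}\mu_{n}-\iint_{Q_{*}}\varphi\, d(\gamma_{n})_{*}\mu\Big|\ge \epsilon_{0}
$$
for every $n$. Since $[h_{n}]\to[h]$ in $T(\H)$, the quasisymmetric constants of $h_{n}$ are uniformly bounded, so $\|\mu_{n}\|$ is uniformly bounded, and any Liouville unit box carries a uniformly bounded mass. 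Hence the restrictions of $(\gamma_{n})_{*}\mu_{n}$ and $(\gamma_{n})_{*}\mu$ to the compact set $Q_{*}$ have uniformly bounded total variation, so after passing to a subsequence they converge in the weak-$*$ sense to Radon measures $\nu^{1}$ and $\nu^{2}$ on $Q_{*}$.

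The heart of the argument is to identify $\nu^{1}=\nu^{2}$. Set $f_{n}:=\gamma_{n}\circ h_{n}\circ\gamma_{n}^{-1}$ and $g_{n}:=\gamma_{n}\circ h\circ\gamma_{n}^{-1}$. By the naturality of earthquakes under $PSL_{2}(\R)$-conjugation, the earthquake measures of $f_{n}$ and $g_{n}$ are $(\gamma_{n})_{*}\mu_{n}$ and $(\gamma_{n})_{*}\mu$, respectively (both only up to post-composition with an element of $PSL_{2}(\R)$, which does not affect the earthquake measure). Because the Teichm\"uller metric on $T(\H)$ is invariant under $PSL_{2}(\R)$-conjugation,
$$
d_{T}([f_{n}],[g_{n}])=d_{T}([h_{n}],[h])\to 0.
$$
Let $\tilde{f}_{n}$ and $\tilde{g}_{n}$ be the representatives of $[f_{n}]$ and $[g_{n}]$ normalized by fixing three prescribed points of $\hat{\R}$. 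Then $\tilde f_{n}\circ\tilde g_{n}^{-1}$ fixes three points and is the boundary of a quasiconformal self-map of $\H$ with dilatation tending to $1$, hence converges to the identity uniformly on $\hat{\R}$ by normal family properties of quasiconformal maps with bounded dilatation. Passing to a further subsequence, the uniformly quasisymmetric family $\tilde{g}_{n}$ converges to some quasisymmetric $G$, and therefore $\tilde f_{n}=(\tilde f_{n}\circ\tilde g_{n}^{-1})\circ\tilde g_{n}\to G$ as well.

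By the continuity of the earthquake measure map under weak convergence of normalized boundary maps, namely \cite[Proposition 3.3]{Sa2}, the earthquake measures of $\tilde{f}_{n}$ and $\tilde{g}_{n}$ both converge weakly to the earthquake measure of $G$. Since post-composition by $PSL_{2}(\R)$ does not alter the earthquake measure, these are also the earthquake measures of $f_{n}$ and $g_{n}$, i.e., $(\gamma_{n})_{*}\mu_{n}$ and $(\gamma_{n})_{*}\mu$. Thus $\nu^{1}=\nu^{2}$, and $\iint_{Q_{*}}\varphi\, d\nu^{1}=\iint_{Q_{*}}\varphi\, d\nu^{2}$, contradicting the assumed separation $\epsilon_{0}$. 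The main obstacle is to arrange the two sequences of pushforward measures to share a common weak limit despite the potentially wild conjugating family $\{\gamma_{n}\}$; $PSL_{2}(\R)$-invariance of the Teichm\"uller metric together with the three-point normalization tames this, and \cite[Proposition 3.3]{Sa2} is invoked as a black box for the weak continuity of $\mathcal{EM}$.
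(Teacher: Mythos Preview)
Your argument is correct and shares the contradiction-and-conjugation skeleton of the paper's proof, but the two diverge at the key step of showing that the pushed-forward sequences $(\gamma_n)_*\mu_n$ and $(\gamma_n)_*\mu$ have the same weak* limit. The paper works on the analytic side: it first takes weak* limits $\nu_*\neq\zeta_*$ (from the assumed $\epsilon_0$-separation), passes to limiting boundary homeomorphisms $f,g$ via \cite{Sa2}, and then uses the Douady--Earle extension and its conformal naturality to deduce $Belt(DE(f))=Belt(DE(g))$, forcing $g\circ f^{-1}\in PSL_2(\R)$ and contradicting $\nu_*\neq\zeta_*$. You instead stay on the boundary: $PSL_2(\R)$-invariance of the Teichm\"uller metric and a three-point normalization give $\tilde f_n\circ\tilde g_n^{-1}\to id$, so $\tilde f_n$ and $\tilde g_n$ share a common subsequential limit $G$, and then continuity of the earthquake measure forces $\nu^1=\nu^2$. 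Your route avoids the Douady--Earle machinery entirely, at the cost of leaning more heavily on \cite{Sa2} as a black box; the paper's route has the advantage that the same Douady--Earle argument is reused verbatim in the remainder of the proof of the theorem.

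Two small points. First, you extract the weak* limits $\nu^1,\nu^2$ only on the compact box $Q_*$, but to identify them with the earthquake measure of $G$ you need weak* convergence on all of $\mathcal G$; pass to a further subsequence using the uniform bound on $\|\mu_n\|$ and $\|\mu\|$. Second, in this paper \cite[Proposition~3.3]{Sa2} is cited for the implication \emph{measures converge weak* $\Rightarrow$ normalized boundary maps converge}, which is the reverse of what you invoke. Your direction follows from that one together with subsequential weak* compactness of bounded laminations and Thurston's uniqueness of the earthquake measure, so the argument is fine, but the citation as written points the wrong way.
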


\begin{proof}
Assume on the contrary that $\mu_n\nrightarrow\mu$ as
$n\to\infty$. Then there exist a subsequence $Q_{n_k}$ with
$L(Q_{n_k})=1$ and a continuous function $\varphi
:\mathcal{G}\to\R$ whose support is in $Q_{*}$ such that
$$
\mathcal{S}_{\varphi}(\mu_{n_k},\mu )\geq\epsilon_0>0.
$$
We write $n$ instead of $n_k$ for simplicity. Let
$\nu_n=(\gamma_{Q_n})^{*}(\mu_n)$ and
$\zeta_n=(\gamma_{Q_n})^{*}(\mu )$ be the push forwards to $Q_{*}$
of $\mu_n$ and $\mu$ by $\gamma_{Q_n}$. Let $t_{\nu_n}$ and
$t_{\zeta_n}$ be two geodesics in $Q_{*}$ which are either in the
support of $\nu_n$ and $\zeta_n$, or one of them is a subset of a
component of the complement of the support. (Note that for at least
one of the measures $\nu_n$ and $\zeta_n$ there is a geodesic of its
support in $Q_{*}$.)

Let $\delta_n,\delta_n'\in PSL_2(\R )$ be such that
$E^{\nu_n}|_{t_{\nu_n}}=\delta_n\circ
E^{\mu_n}\circ\gamma_{Q_n}^{-1}|_{t_{\nu_n}}=id$ and that
$E^{\zeta_n}|_{t_{\zeta_n}}=\delta_n'\circ
E^{\mu}\circ\gamma_{Q_n}^{-1}|_{t_{\zeta_n}}=id$. Note that
$\|\nu_n\|$ and $\|\zeta_n\|$ are bounded uniformly in $n$ (because
$[h_n]\to [h]$ in the Teichm\"uller metric). It follows that there
exists subsequences of $E^{\nu_n}|_{\hat{\R}}$ and
$E^{\zeta_n}|_{\hat{\R}}$ which converge weakly on $\hat{R}$ to
homeomorphisms $f$ and $g$, respectively (see \cite{Sa2}). Moreover,
there are subsequences of $\nu_n$ and $\zeta_n$ which converge in
the weak* sense to bounded measured laminations $\nu_*$ and
$\zeta_{*}$, respectively. We can assume without loss of generality
that the whole sequence converges. It is clear that
$\nu_{*}\neq\zeta_{*}$ because $\nu_n$ and $\zeta_n$ differ on
$Q_{*}$ by a definite amount (according to the assumption). This
implies that $g\circ f^{-1}$ is not in $PSL_2(\R )$.

On the other hand,
$$
|Belt(DE(E^{\nu_n}|_{\hat{\R}}))-Belt(DE(E^{\zeta_n}|_{\hat{\R}}))|\to
0
$$
uniformly on compact subsets of $\H$ because
$E^{\nu_n}|_{\hat{\R}}=\delta\circ h_n\circ\gamma_n^{-1}$ and
$E^{\zeta_n}|_{\hat{\R}}=\delta'\circ h\circ\gamma_n^{-1}$,
$[h_n]\to [h]$ and by the conformal naturality of the Douady-Earle
extension. This is implies that $Belt(DE(g))=Belt(DE(f))$ which is a
contradiction with the above.
\end{proof}

Let $d([h_1],[h_2])$ denote the Teichm\"uller distance between
$[h_1],[h_2]\in T(\H )$. Assume on the contrary that
$d(E^{(1-t_n)\mu_n}|_{\hat{\R}},E^{(1-t)\mu}|_{\hat{\R}})\nrightarrow
0$ as $n\to\infty$. This is equivalent to the statement that there
exists $z_n\in\H$ such that
$$
\big{|} Belt[DE(E^{(1-t_n)\mu_n}|_{\hat{\R}})](z_n)-
Belt[DE(E^{(1-t)\mu}|_{\hat{\R}})](z_n) \big{|}\geq\epsilon_0>0.
$$
Let $\gamma_n\in PSL_2(\R )$ be such that $\gamma_n(z_n)=i$. There
exist subsequences of $(\gamma_n)_{*}((1-t_n)\mu_n)$ and of
$(\gamma_n)_{*}((1-t)\mu )$ which converge in the weak* sense. We
can assume without loss of generality that both sequences converge
in the weak* sense and the above lemma implies that their limits are
equal. Thus there exist $\delta_n,\delta_n'\in PSL_2(\R )$ such that
$f_n=\delta_n\circ E^{(1-t_n)\mu_n}|_{\hat{\R}}\circ\gamma_n^{-1}$
and $g_n=\delta_n'\circ E^{(1-t)\mu}|_{\hat{\R}}\circ\gamma_n^{-1}$
pointwise converge to the same homeomorphism of $\hat{\R }$. This
implies that $Belt(DE(f_n))$ and $Belt(DE(g_n))$ are close on
compact subsets of $\H$ which gives a contradiction with the
conformal naturality of the Douady-Earle extension and the fact that
$\gamma_n(z_n)=i$ (see the proof of the above lemma for a similar
reasoning).
\end{proof}

The above theorem gives a new proof that the Teichm\"uller space
of an arbitrary Riemann surface is contractible. This was
originally proved by Douady and Earle \cite{DE} (see \cite{EMS}
for contractibility of the subspace of asymptotically conformal
classes).

\begin{corollary}
The Teichm\"uller space $T(\H /G)$ of a hyperbolic surface $\H /G$
is contractible. If $\H /G$ is geometrically infinite, then the
subspace of $T(\H /G)$ consisting of all asymptotically conformal
classes is also contractible.
\end{corollary}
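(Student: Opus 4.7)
The plan is to use the scaling measure map $H$ of Theorem 4.1, restricted via the remark after it to $T(\H/G)$, as an explicit contraction. First I would verify the endpoint conditions. At $t=0$ the measure is $\mathcal{EM}([h])$ itself; by the definition of the earthquake measure map, $E^{\mathcal{EM}([h])}$ is the earthquake (unique up to post-composition with $PSL_2(\R)$) whose continuous extension to $\hat{\R}$ represents $[h]$, so $H(0,[h])=[h]$. At $t=1$ the measure $0\cdot\mathcal{EM}([h])$ is the zero lamination; its earthquake is a M\"obius transformation, and hence $H(1,[h])=[\mathrm{id}]$, the basepoint of $T(\H/G)$, independently of $[h]$. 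Combined with the continuity of $H$ given by Theorem 4.1, this exhibits $T(\H/G)$ as contractible to the basepoint via $H$.

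For the second assertion I would combine Theorem 3.1 with the same scaling homotopy. By Theorem 3.1, a class $[h]\in T(\H/G)$ contains an asymptotically conformal representative (up to bounded homotopy) precisely when $\mu_h=\mathcal{EM}([h])$ is asymptotically trivial on $\H/G$; thus the subspace of asymptotically conformal classes is exactly $\mathcal{EM}^{-1}$ of the asymptotically trivial bounded measured laminations. It therefore suffices to check that whenever $\mu_h$ is asymptotically trivial, $(1-t)\mu_h$ is asymptotically trivial for every $t\in[0,1]$. This is immediate from the definition: given $\epsilon>0$, pick a compact $K\subset\H/G$ with $\sup_{I\cap\tilde{K}=\emptyset}\mu_h(I)<\epsilon$; then for all $t\in[0,1]$,
\[
\sup_{I\cap\tilde{K}=\emptyset}(1-t)\mu_h(I)\leq\sup_{I\cap\tilde{K}=\emptyset}\mu_h(I)<\epsilon.
\]
Consequently $H$ restricts to a continuous homotopy from the identity to the constant map $[\mathrm{id}]$ on the asymptotically conformal subspace, proving its contractibility.

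The corollary is essentially a direct packaging of Theorem 4.1 and Theorem 3.1, so there is no real obstacle. The only points requiring a moment's care are the boundary identifications $H(0,[h])=[h]$ and $H(1,[h])=[\mathrm{id}]$, which follow formally from the injectivity of $\mathcal{EM}$ and from the fact that the zero measured lamination corresponds to the trivial earthquake, together with the elementary observation that the class of asymptotically trivial measures is stable under multiplication by scalars in $[0,1]$.
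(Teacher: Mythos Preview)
Your proposal is correct and follows precisely the argument the paper intends: the corollary is stated without explicit proof, but the surrounding text and the remark immediately after it make clear that the contraction is the scaling map $H$ of Theorem~4.1 (restricted to $T(\H/G)$), and that the asymptotically conformal subspace is handled by checking it is closed under earthquake paths, which via Theorem~3.1 amounts exactly to your observation that asymptotic triviality is preserved under scaling by $(1-t)$.
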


\begin{remark}
The above theorem implies contractibility of any subspace of $T(\H
)$ as long as it is closed under earthquake paths.
\end{remark}

\section{Minimal rate of decrease toward the boundary}

We establish a sufficient condition for a measured lamination $\mu$
on the hyperbolic plane to be trivial in terms of its rate of
decrease when approaching the boundary. It is convenient to work
with the unit disk $\D$ model of the hyperbolic plane, where the
hyperbolic metric is $\rho (z)=\frac{2|dz|}{1-|z|^2}$. Let $I$ be a
closed hyperbolic arc of length $1$ in $\D$. Denote by $\delta (I)$
the minimal Euclidean distance from $I$ to the unit circle
$S^1=\partial\D$. We say that $\mu$ {\it decreases of the order}
$o(\delta )$ and write $\mu =o(\delta )$ if
$$
\mu (I)/\delta(I)\to 0
$$
as $\delta (I)\to 0$ uniformly in $I$.

\begin{proposition}
Let $\mu$ be a measured lamination on $\D$. If $\mu =o(\delta )$
then $\mu$ is trivial, i.e. $\mu =0$.
\end{proposition}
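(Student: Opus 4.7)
My plan is to transfer to the upper half-plane model $\H$, show that the hypothesis forces the $\mu$-measure of bounded horizontal segments at small height to vanish, and then pass to a dual Radon measure on the space of geodesics where a Fatou argument concludes $\mu = 0$.

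First, I would transfer the statement via the Cayley isomorphism to $\H$: the condition $\mu = o(\delta)$ implies, for every fixed bounded interval $[A,B] \subset \R$, that $\mu([A,B] \times \{y\}) \to 0$ as $y \to 0$. Indeed, a horizontal arc $I = [x_0 - y/2, x_0 + y/2] \times \{y\}$ has hyperbolic length $1$ with $\delta(I) = y$, so the hypothesis gives $\mu(I) \leq \varepsilon_y \cdot y$ with $\varepsilon_y \to 0$ uniformly in $x_0$. Covering $[A,B] \times \{y\}$ by $\lceil (B-A)/y \rceil$ such arcs then yields $\mu([A,B] \times \{y\}) \leq (B-A+y)\varepsilon_y \to 0$.

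Next, I would represent $\mu$ by its associated positive Radon measure $\tilde\mu$ on the space $\mathcal{G}$ of unoriented geodesics of $\H$ via the standard intersection-counting identity $\mu(J) = \int_\mathcal{G} \#(g \cap J)\, d\tilde\mu(g)$ for transverse arcs $J$. Setting $G_y = \{g \in \mathcal{G} : g \cap ([A,B] \times \{y\}) \neq \emptyset\}$, each leaf in $G_y$ contributes at least $1$ to the integrand, so $\tilde\mu(G_y) \leq \mu([A,B] \times \{y\}) \to 0$. A geodesic with a real endpoint in the open interval $(A,B)$ belongs to $G_y$ for all sufficiently small $y > 0$: this is trivial for vertical leaves, and for a semicircle with real endpoints $a<b$ it follows because the lower crossing $x_-(y) = (a+b)/2 - \sqrt{((b-a)/2)^2 - y^2}$ tends monotonically to $a$ as $y \to 0$. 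Therefore $\liminf_{y \to 0} G_y \supseteq \{g : g \text{ has a real endpoint in } (A,B)\}$, and Fatou's lemma for set sequences gives $\tilde\mu(\{g : g \text{ has endpoint in } (A,B)\}) = 0$.

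Letting $(A,B)$ exhaust $\R$, $\tilde\mu$ vanishes on all geodesics of $\H$ having at least one real endpoint. Since in $\H$ both endpoints of a geodesic cannot simultaneously equal $\infty$, this exhausts $\mathcal{G}$, so $\tilde\mu = 0$ and hence $\mu = 0$. The main obstacle I anticipate is justifying the dual Radon representation $\tilde\mu$ with its intersection-counting identity; this is standard for bounded measured laminations on the hyperbolic plane (and $\mu = o(\delta)$ easily implies $\mu$ is bounded), but should be invoked or briefly established. A minor auxiliary verification is the $\liminf$ inclusion above, which follows directly from the explicit formula for $x_-(y)$.
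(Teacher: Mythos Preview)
Your argument is correct and shares the paper's key idea: a long transversal at distance $\delta$ from the boundary has hyperbolic length of order $1/\delta$ and hence $\mu$-mass at most $o(\delta)\cdot O(1/\delta)=o(1)$, while every leaf of the lamination must eventually cross such transversals. The executions differ. The paper stays in $\D$ and takes the concentric circles $C_n=\{|z|=1-1/n\}$, whose hyperbolic length is $\frac{4\pi(n-1)}{2-1/n}$ and whose distance to $S^1$ is $1/n$, so $\mu(C_n)\to 0$; since each $C_n$ is a \emph{closed} curve, every geodesic of the support crosses it once $n$ is large, and the conclusion follows in one line without any exhaustion or Fatou step. Your bounded horizontal segments in $\H$ only capture leaves landing in $(A,B)$, so you need the exhaustion $(A,B)\uparrow\R$ together with the dual Radon measure and the Fatou argument; this is sound but less economical. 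One small imprecision: after the Cayley transform the quantity $\delta(I)$ (defined in the paper as the Euclidean distance to $S^1$ in $\D$) is not equal to $y$ but only comparable to $y/(1+x_0^2)$; since you restrict to $x_0\in[A,B]$ this is harmless, but the equality ``$\delta(I)=y$'' should be replaced by the comparability $\delta(I)\asymp_{A,B} y$.
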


\begin{proof}
Consider the Euclidean circle of radius $1-1/n$ with center $0$. The
hyperbolic length of the circle is $\frac{4\pi (n-1)}{2-1/n}$. The
circle is on the Euclidean distance $1/n$ from $S^1$. The assumption
on $\mu$ implies that the total $\mu$-measure of the circle is
$o(1/n)\cdot \frac{4\pi (n-1)}{2-1/n}\to 0$ as $n\to\infty$. Since
each compact subset of the support of $\mu$ intersects all such
circles for $n$ large enough, it follows that the $\mu$-measure of
each compact set in its support is zero.
\end{proof}

\end{document}